\newtheorem{theorem}{Theorem}
\newcommand{\dif}{\,\mathrm{d}}
\title{A preconditioning technique of Gauss--Legendre quadrature for the logarithm of symmetric positive definite matrices}
\author{
  Fuminori Tatsuoka\footnote{
    Department of Applied Physics, Graduate School of Engineering, Nagoya University, Furo-cho, Chikusa-ku, Nagoya, 464-8603, Aichi, Japan.
    \texttt{\{f-tatsuoka,sogabe,kemmochi,zhang\}@na.nuap.nagoya-u.ac.jp}
  }
  \and Tomohiro Sogabe$^\ast$
  \and Tomoya Kemmochi$^\ast$
  \and Shao-Liang Zhang$^\ast$
}
\begin{document}

\maketitle

\begin{abstract}
  This note considers the computation of the logarithm of symmetric positive definite matrices using the Gauss--Legendre (GL) quadrature.
  The GL quadrature becomes slow when the condition number of the given matrix is large.
  In this note, we propose a technique dividing the matrix logarithm into two matrix logarithms, where the condition numbers of the divided logarithm arguments are smaller than that of the original matrix.
  Although the matrix logarithm needs to be computed twice, each computation can be performed more efficiently, and it potentially reduces the overall computational cost.
  It is shown that the proposed technique is effective when the condition number of the given matrix is approximately between $130$ and $3.0\times 10^5$.
\end{abstract}

\section{Introduction}\label{sect.introduction}
A logarithm of a square matrix $A \in {\mathbb{R}}^{n \times n}$ is defined as any matrix $X$ such that
\begin{align}\label{eq.def.logarithm}
  \exp(X) = A,
\end{align}
where $\exp(X) := I + X + X^{2}/2! + X^{3}/3! + \cdots$.
If all eigenvalues of $A$ lie in $\{ z \in {\mathbb{C}}:z \notin ( - \infty,0]\}$, there exists a unique solution of \eqref{eq.def.logarithm} whose eigenvalues are all in $\left\{ z \in {\mathbb{C}}:|\Im(z)| < \pi \right\}$.
Such a solution is called the principal logarithm and is denoted by $\log(A)$, see, e.g. \cite{higham2008functions} for more details.
The matrix logarithm arises in several situations of scientific computing, such as the von Neumann entropy in quantum information theory \cite{bengtsson2006geometry} and the log-determinant in machine learning \cite{han2015large}.

This note considers quadrature-based algorithms for computing the principal logarithm of symmetric positive definite (SPD) matrices.
As the name suggests, quadrature-based algorithms are methods that compute an integral representation of the matrix logarithm, such as
\begin{align} \label{eq.integral-representation}
  \log(A) = (A - I)\int_{0}^{1}\left\lbrack u(A - I) + I \right\rbrack^{- 1}du = (A-I) \int_{- 1}^{1}F(t;A) \dif t,
\end{align}
where $F(t; A) := [(1-t)I + (1+t)A]^{-1}$ and $u(t) = 2t-1$ \cite[Thm.~11.1]{higham2008functions}.
Compared to other computational methods for $\log(A)$, quadrature-based algorithms have the advantage of being well-suited for parallel computation and can be directly applied to the computation of the action of the matrix logarithm $\log(A)\boldsymbol{b}$ ($\boldsymbol{b} \in {\mathbb{R}}^{n}$).
See, e.g., \cite[Sect.~18]{trefethen2014exponentially} and \cite{tatsuoka2020algorithms} for more details.
For the computation of \eqref{eq.integral-representation}, the Gauss--Legendre (GL) quadrature and the double exponential formula are considered, see, e.g. \cite{tatsuoka2020algorithms,dieci1996computational}.

The motivation of this study is to improve the speed of the GL quadrature based on the equation
\begin{align} \label{eq.def.preconditioning}
  \log(A) = \log(AP) - \log(P)
\end{align}
with a suitable matrix $P \in {\mathbb{R}}^{n \times n}$.
If both $\log(AP)$ and $\log(P)$ can be computed more efficiently than $\log(A)$, the total computational cost for the right-hand side can be smaller than that of the left-hand side.
We refer to \eqref{eq.def.preconditioning} as ``preconditioning'' and to $P$ as the ``preconditioner,'' in accordance with the terminology used in the literature on numerical methods for linear systems.

Focusing on the SPD matrices, we propose a preconditioner of the form $P = (A + sI)^{-1} ~ (s \ge 0)$.
To the best of our knowledge, there are few studies on the preconditioning of quadrature-based algorithms for $\log(A)$.
In the literature \cite[Sect.~3]{fasi2018computing}, it is reported that multiplying $A$ by an appropriate scalar constant $c$ improves the convergence, which can be regarded as preconditioning with $P = cI$.
However, even after such scaling, the GL quadrature converges slowly when the condition number $\kappa(A)$ is large.
Here, we try reducing the condition number of both $AP$ and $P$ with the matrix-type preconditioner.
We study the selection of $s$, and we demonstrate that the GL quadrature with the proposed preconditioner is faster than existing algorithms when $\kappa(A)$ is approximately between $130$ and $3.3\times 10^5$.

The organization of this note is as follows.
In Section 2, we review the convergence of the GL quadrature for $\log(A)$.
In Section 3, we propose the preconditioner and show its convergence rate.
In Section 4, we present numerical results, and we give conclusions in Section 5.

\section{Convergence of the GL quadrature for the logarithm of SPD matrices}
In this section, we review the convergence rate of the GL quadrature for the logarithm of SPD matrices.
Let $A$ be an SPD matrix, $\Lambda$ be the spectrum of $A$, and $\lambda_{\max}$ and $\lambda_{\min}$ be the maximum and the minimum eigenvalue of $A$ respectively.
Then the error of a given quadrature formula for $\log(A)$ can be analyzed via that for the scalar logarithm:
\begin{align} \label{eq.scalar-logarithm}
  \left\| {\log(A) - \sum_{k = 1}^{m}w_{k}F\left( t_{k};A \right)} \right\|_{2} = \max_{\lambda \in \Lambda}\ \left| {\log(\lambda) - \sum_{k = 1}^{m}w_{k}F\left( t_{k};\lambda \right)} \right|,
\end{align}
where $m$ is the number of abscissas, and $\left\{ t_{k} \right\}_{k = 1}^{m}$ and $\left\{ w_{k} \right\}_{k = 1}^{m}$ are the abscissas and the weights of the given $m$-point quadrature formula, respectively.

The error of the GL quadrature for \eqref{eq.integral-representation} can be estimated by using the result in \cite[Sect.~3]{fasi2018computing}.
In the study \cite{fasi2018computing}, the integral $\int_{-1}^1 (1-t)^{-\alpha}(1+t)^{\alpha-1} [(1+t)A + (1-t)I]^{-1} \dif t ~ (\alpha \in (0, 1))$ is considered, and the Gauss--Jacobi quadrature with the weight $(1-t)^{-\alpha}(1+t)^{\alpha-1}$ is analyzed.
Because the integrand without the weight is the same as the integrand in \eqref{eq.integral-representation}, we can apply the estimate in \cite[Sect.~3]{fasi2018computing} to \eqref{eq.integral-representation}.
Using the results, the error of the $m$-point GL quadrature for $\log(\lambda)$ is estimated as
\begin{align}
  \left| {\log(\lambda) - \sum_{k = 1}^{m}w_{k}F\left( t_{k};\lambda \right)} \right|
  \le K \exp\Bigl(-\rho(\lambda) m\Bigr),
  \quad \rho(\lambda) = \log \left(\frac{(1 + \sqrt{\lambda})^2}{(1 - \sqrt{\lambda})^2}\right),
\end{align}
where $K$ is a scalar constant independent of $m$.
The convergence of the GL quadrature for $\log(\lambda)$ can be slow when $|\log(\lambda)|$ is large.
This is because the function $g(\lambda) = (1 + \sqrt{\lambda})^2 / (1 - \sqrt{\lambda})^2$ monotonically decreases for $\lambda > 1$ and satisfies $g(\lambda) = g(1/\lambda)$.

Hence, the error of the GL quadrature for $\log(A)$ depends on the extreme eigenvalues of $A$.
When the extreme eigenvalues are far from 1, i.e., $\lambda_{\max} \gg 1$ or $\lambda_{\min} \ll 1$, the convergence becomes slow.
Thus, it is better to scale $A$ to $\widetilde{A} = cA$ with $c = 1/\sqrt{\lambda_{\max} \lambda_{\min}}$ so that $g(c \lambda_{\max}) = g(c\lambda_{\min})$.
We note that $A$ can be scaled without assumptions because $\log(cA) = \log(A) + \log(c)I$ for $c>0$.
Finally, the error of the GL quadrature for $\log(\widetilde{A})$ is
\begin{align}\label{eq.err-gl-logm}
  \left\| {\log(\widetilde{A}) - \sum_{k = 1}^{m}w_{k}F\left( t_{k};\widetilde{A} \right)} \right\|_2 \le K'\exp\Bigl(-\rho\bigl(\kappa(A)\bigr) m\Bigr),
  \quad
  \rho(\kappa) = 2\log\left( \frac{\kappa^{1/4} + 1}{\kappa^{1/4} - 1} \right).
\end{align}
with the scalar constant $K'$ independent of $m$.
The coefficient $\rho(\kappa(A))$ in \eqref{eq.err-gl-logm} decreases as $\kappa(A)$ increases, and therefore the convergence becomes slow as $\kappa(A)$ increases.

\section{Preconditioning}\label{sect.preconditioning}
Our preconditioning technique is applied after the scaling.
Hence, the preconditioner is of the form $\widetilde{P}_s := (\widetilde{A} + sI)^{-1}$, where $\widetilde{A} = cA$, $c = 1 / \sqrt{\lambda_{\max} \lambda_{\min}}$, and $s \ge 0$.
With the preconditioning, $\log(A)$ is computed by
\begin{align}\label{eq:actual-preconditioning}
  \log(A) &= \log(\widetilde{A}) - \log(c)I\\
  & = \log(\widetilde{A}\widetilde{P}_s) - \log(\widetilde{P}_s) - \log(c)I\\
  & = \log(c'\widetilde{A}\widetilde{P}_s) - \log(c''\widetilde{P}_s) - \left[\log(c') - \log(c'') +  \log(c)\right]I\\
  & = \log(c'\widetilde{A}\widetilde{P}_s) - \log(c''\widetilde{P}_s) - \log(c)I, 
\end{align}
where $c$ and $c''$ are respectively the reciprocals of the geometric mean of the extreme eigenvalues of $\widetilde{A}\widetilde{P}_s$ and $\widetilde{P}_s$, and some calculation leads to $c' = c'' = \sqrt{(c\lambda_{\max}+s)(c\lambda_{\min} + s)}$.
In this section, we first check that $\widetilde{P}_s$ can be used as a preconditioner.
Then, we discuss the choice of $s$ that reduces the condition numbers of both $\widetilde{A}\widetilde{P}_s$ and $\widetilde{P}_s$, and we consider the convergence rate of preconditioned algorithms.

First, let us recall the following theorem about the sufficient condition to use a matrix as a preconditioner.
\begin{theorem}[see {\cite[Thm.~11.3]{higham2008functions}}] \label{thm.sufficient_condition}
  Suppose that both $A,P \in {\mathbb{C}}^{n \times n}$ have no eigenvalues on $\{ z \in {\mathbb{C}}:z \notin ( - \infty,0\rbrack\}$ and that $AP = PA$.
  For each eigenvalue $\lambda$ of $A$ there is an eigenvalue $\mu$ of $P$ such that $\lambda + \mu$ is an eigenvalue of $A + P$, and the eigenvalue $\mu$ is called the eigenvalue corresponding to $\lambda$.
  If $|\arg\lambda + \arg\mu| < \pi$ holds for every eigenvalues $\lambda$ of $A$ and its corresponding eigenvalue $\mu$ of $P$, then $\log(AP) = \log(A) + \log(P)$.
\end{theorem}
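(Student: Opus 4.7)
The plan is to reduce the matrix identity to a scalar one via simultaneous triangularization and then invoke the uniqueness of the principal logarithm as the matrix logarithm whose spectrum lies strictly inside the strip $|\Im z| < \pi$. I will not try to compute $\log(AP)$ directly; instead I will manipulate $\log(A) + \log(P)$ and identify it with the principal logarithm of $AP$.

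First, since $A$ and $P$ commute in ${\mathbb{C}}^{n\times n}$, the standard Schur theorem for commuting matrices gives a unitary $U$ with $U^\ast A U$ and $U^\ast P U$ both upper triangular. The diagonals supply a fixed pairing $(\lambda_i, \mu_i)$ of eigenvalues of $A$ and $P$ that is compatible with every polynomial in the two matrices, in particular with $A+P$ (diagonal $\lambda_i + \mu_i$) and $AP$ (diagonal $\lambda_i \mu_i$). This is the correspondence used in the hypothesis.

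Next, I would observe that $\log(A)$ and $\log(P)$ commute. Each is a primary matrix function, hence a polynomial in the underlying matrix (via Hermite interpolation on the spectrum); since $AP = PA$, any polynomial in $A$ commutes with any polynomial in $P$. The commuting-exponentials identity then yields
\begin{align}
  \exp\!\bigl(\log(A) + \log(P)\bigr) = \exp(\log A)\,\exp(\log P) = AP,
\end{align}
so $\log(A) + \log(P)$ is some matrix logarithm of $AP$. It remains to identify it with the principal one.

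In the simultaneous Schur basis, $\log(A) + \log(P)$ is upper triangular with diagonal $\log(\lambda_i) + \log(\mu_i)$, and these are its eigenvalues. Using the principal scalar branch $\arg z \in (-\pi,\pi]$ one has $\Im(\log \lambda_i + \log \mu_i) = \arg \lambda_i + \arg \mu_i$, and the hypothesis $|\arg\lambda_i + \arg\mu_i| < \pi$ places every such eigenvalue strictly inside $|\Im z| < \pi$. The same inequality also prevents $\lambda_i \mu_i$ from lying on $(-\infty,0]$, so $\log(AP)$ is defined and is characterized uniquely by having its spectrum in that strip; thus $\log(AP) = \log(A) + \log(P)$. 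The hard part, to my mind, is the bookkeeping that confirms the eigenvalue pairing arising from simultaneous triangularization is exactly the one used in the hypothesis; once that is in place, the argument is a straightforward combination of the spectral mapping theorem and the defining property of the principal branch.
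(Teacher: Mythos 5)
Your argument is correct and is essentially the standard proof of this result: the paper itself does not prove the theorem but cites it from Higham's book, and the proof there proceeds exactly as you do, showing that $\log(A)+\log(P)$ commutes and exponentiates to $AP$, then identifying it as the principal logarithm because its eigenvalues $\log\lambda_j+\log\mu_j$ have imaginary parts $\arg\lambda_j+\arg\mu_j\in(-\pi,\pi)$. Your handling of the eigenvalue pairing via simultaneous unitary triangularization and your check that $AP$ has no eigenvalues on $(-\infty,0]$ are both sound.
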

The matrices $\widetilde{A}$ and $\widetilde{P}_s$ satisfy the assumptions of Theorem \ref{thm.sufficient_condition} because $\widetilde{A}$ and $\widetilde{P}_s$ are commutative and all eigenvalues of $\widetilde{A}$ and $\widetilde{P}_s$ are positive.
Therefore, $\widetilde{P}_s$ can be used as a preconditioner because \eqref{eq.def.preconditioning} is satisfied for $A = \widetilde{A}$ and $P = \widetilde{P}_s$.

Now, we propose
\begin{align}
  \widetilde{P}_{1} = (\widetilde{A} + I)^{-1}
\end{align}
as the preconditioner because $s = 1$ minimizes the condition numbers of $\widetilde{A}\widetilde{P}_s$ and $\widetilde{P}_s$ according to the following theorem:
\begin{theorem}
  Let $A \ne I$ be an SPD matrix, and $\widetilde{A} = cA$ where $c = 1 / \sqrt{\lambda_{\max} \lambda_{\min}}$.
  Then, the following two equations hold:
  \begin{align}
    1 = \operatorname{argmin}\limits_{s \ge 0}\ \max\left\{ \kappa(\widetilde{A}\widetilde{P}_s),\kappa(\widetilde{P}_s) \right\},
    \qquad
    \kappa(\widetilde{A}\widetilde{P}_1) = \kappa(\widetilde{P}_1) = \sqrt{\kappa(\widetilde{A})}.
  \end{align}
\end{theorem}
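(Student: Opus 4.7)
The plan is to reduce both claims to a one-parameter scalar minimax problem. Let $\mu := \sqrt{\kappa(A)}$. Since $c = 1/\sqrt{\lambda_{\max}\lambda_{\min}}$, the spectrum of $\widetilde{A}$ is contained in $[1/\mu,\mu]$ with both endpoints attained, and $\widetilde{A}$ commutes with $\widetilde{P}_s$, so the two matrices are simultaneously diagonalizable. As $\lambda$ ranges over the eigenvalues of $\widetilde{A}$, the eigenvalues of $\widetilde{P}_s$ are $1/(\lambda+s)$ and those of $\widetilde{A}\widetilde{P}_s$ are $\lambda/(\lambda+s)$. For every fixed $s \ge 0$ the maps $\lambda \mapsto 1/(\lambda+s)$ and $\lambda \mapsto \lambda/(\lambda+s)$ are strictly monotone in $\lambda>0$, so the extremal eigenvalues of both matrices are attained uniformly at $\lambda = \mu$ and $\lambda = 1/\mu$.

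A direct computation then yields the closed forms
\begin{equation}
\kappa(\widetilde{P}_s) = \frac{\mu(\mu+s)}{1+\mu s}, \qquad \kappa(\widetilde{A}\widetilde{P}_s) = \frac{\mu(1+\mu s)}{\mu+s}.
\end{equation}
Substituting $s = 1$ collapses both right-hand sides to $\mu = \sqrt{\kappa(A)}$, which settles the second equation of the theorem.

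For the minimax claim I would compute the $s$-derivatives and read off monotonicity. Assuming $\mu>1$ (i.e.\ $\kappa(A)>1$), a quick calculation gives
\begin{equation}
\frac{\dif}{\dif s}\kappa(\widetilde{P}_s) = \frac{\mu(1-\mu^2)}{(1+\mu s)^2} < 0, \qquad \frac{\dif}{\dif s}\kappa(\widetilde{A}\widetilde{P}_s) = \frac{\mu(\mu^2-1)}{(\mu+s)^2} > 0,
\end{equation}
so on $[0,\infty)$ the first function is strictly decreasing and the second strictly increasing. Combined with the common value $\mu$ attained at $s=1$, this forces $\max\{\kappa(\widetilde{A}\widetilde{P}_s),\kappa(\widetilde{P}_s)\} > \mu$ for every $s \ne 1$, proving the first assertion. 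The degenerate case $\kappa(A)=1$ (i.e.\ $A$ a positive multiple of $I$, still compatible with $A \ne I$) makes both curves identically equal to $1$, and the statement $1 \in \operatorname{argmin}$ holds trivially.

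The main obstacle is more bookkeeping than substance: the essential step is the uniform identification, valid for all $s \ge 0$ simultaneously, of which end of the spectrum of $\widetilde{A}$ produces the largest and smallest eigenvalues of $\widetilde{A}\widetilde{P}_s$ and of $\widetilde{P}_s$. Once the monotonicity of $\lambda \mapsto \lambda/(\lambda+s)$ and $\lambda \mapsto 1/(\lambda+s)$ in $\lambda$ is used to reduce everything to the two extremal eigenvalues $\mu$ and $1/\mu$, what remains is a one-variable calculus exercise hinging on the algebraic identity $\kappa(\widetilde{P}_1) = \kappa(\widetilde{A}\widetilde{P}_1) = \mu$.
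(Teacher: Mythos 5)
Your proof is correct and follows essentially the same route as the paper: both derive the closed forms of $\kappa(\widetilde{A}\widetilde{P}_s)$ and $\kappa(\widetilde{P}_s)$ from the extremal eigenvalues, exploit that one is increasing and the other decreasing in $s$, and identify $s=1$ as the crossing point (you verify $s=1$ directly and use strict monotonicity, whereas the paper solves the resulting quadratic). Your explicit treatment of the degenerate case $\kappa(A)=1$, where $A\ne I$ but $\widetilde{A}=I$, is in fact slightly more careful than the paper, which tacitly assumes $\widetilde{A}\ne I$.
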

\begin{proof}
  The condition numbers of $\widetilde{A}\widetilde{P}_{s}$ and $\widetilde{P}_{s}$ are
  \begin{align}\label{eq.tmp002}
    \kappa(\widetilde{A}\widetilde{P}_{s}) = \frac{c\lambda_{\max}\left( c\lambda_{\min} + s \right)}{c\lambda_{\min}\left( c\lambda_{\max} + s \right)},\qquad\kappa(\widetilde{P}_s) = \frac{c\lambda_{\max} + s}{c\lambda_{\min} + s}.
  \end{align}
  Since $\widetilde{A} \ne I$, $\kappa(\widetilde{A}\widetilde{P}_s)$ is a monotonically increasing function of $s$ while $\kappa(\widetilde{P}_s)$ is a monotonically decreasing function of $s$.
  Therefore, the solution $s$ of the equation
  \begin{align} \label{eq.tmp001}
    \kappa(\widetilde{A}\widetilde{P}_s) = \kappa(\widetilde{P}_s)
  \end{align}
  is the minimizer of $\max \{\kappa(\widetilde{A}\widetilde{P}_s), \kappa(\widetilde{P}_s)\}$.
  The equation \eqref{eq.tmp001} has one solution in $[0,\infty)$ because $\kappa(\widetilde{A}\widetilde{P}_0) = 1$, $\lim_{s \to \infty} \kappa(\widetilde{A}\widetilde{P}_s) = \kappa(\widetilde{A})$, $\kappa(\widetilde{P}_0) = \kappa(\widetilde{A})$, $\lim_{s \to \infty} \kappa(\widetilde{P}_s) = 1$.
  By substituting \eqref{eq.tmp002}, $c\lambda_{\max} = \sqrt{\kappa(A)}$, and $c\lambda_{\min} = 1/\sqrt{\kappa(A)}$ into \eqref{eq.tmp001}, we have a quadratic equation in $s$, and algebraic manipulations yield the solution $s = 1$.
  It is easily seen that $\kappa(\widetilde{P}_1) = \sqrt{\kappa(\widetilde{A})}$.
\end{proof}

After preconditioning, it is necessary to compute both $\log(\widetilde{A}\widetilde{P}_1)$ and $\log(\widetilde{P}_1)$. When the total number of abscissas is $m$, each will be computed with $m/2$ abscissas.
Here, since $\kappa(\widetilde{A}\widetilde{P}_1) = \kappa(\widetilde{P}_1) = \sqrt{\kappa(\widetilde{A})}$, the error of the GL quadrature for the two logarithms is $\mathcal{O}(\exp(-\rho(\sqrt{\kappa(\widetilde{A})})m/2))$, where $\rho(\cdot)$ is defined in \eqref{eq.err-gl-logm}.
Finally, the error of the $m$-point preconditioned GL quadrature is
\begin{align}
  \left\| {\log(\widetilde{A})
    - \sum_{k = 1}^{m/2}w_{k}F\left( t_{k};c'\widetilde{A}\widetilde{P}_1 \right)}
    + \sum_{k = 1}^{m/2} w_{k} F\left( t_{k};c''\widetilde{P}_1 \right)
    + \log(c)I
  \right\|_2
  \le K''\exp\left(-\rho\left(\sqrt{\kappa(\widetilde{A})}\right) \frac{m}{2}\right),
\end{align}
where $c', c''$ are defined at \eqref{eq:actual-preconditioning}, and $K''$ is a scalar constant independent of $m$.
Therefore, we can compare the convergence speed with and without the preconditioning by comparing  $\rho(\kappa(\widetilde{A}))$ to $\rho\left(\sqrt{\kappa(\widetilde{A})}\right)/2$.

Before the comparison, we also see the convergence on the DE formula for reference.
In a non-reviewed report \cite{tatsuoka2020convergence}, the error of the DE formula for $\log(\widetilde{A})$ is reported as 
\begin{align}
  \left\| {\log(\widetilde{A}) - \sum_{k = 1}^{m}w_{k}F\left( t_{k};\widetilde{A} \right)} \right\|_2 = K'''\exp(-\rho(\kappa(A)) m),
  \quad \rho(\kappa) = \frac{2\pi d_{0}(\sqrt{\kappa})}{r - l},
\end{align}
where, $t_k, w_k$ are the abscissas and the weights of the DE formula, and $K'''$ is a scalar constant independent of $m$,
\begin{align}
  d_{0}(\lambda) = \arcsin\left(\sqrt{\frac{\left( \log\lambda \right)^{2} + 2\pi^{2} - \sqrt{\left\lbrack \left( \log\lambda \right)^{2} + 2\pi^{2} \right\rbrack^{2} - 4\pi^{4}}}{2\pi^{2}}}\right),
\end{align}
and $l, r$ are determined by the error tolerance of the DE formula.
Because the convergence of the DE formula becomes slow as $\kappa(A)$ becomes large, we also have a choice to apply the preconditioning technique to the DE formula.

Figure \ref{fig.convergence_rate} illustrates the convergence speed of the GL quadrature (\texttt{GL}), the DE formula (\texttt{DE}), the preconditioned GL quadrature (\texttt{PGL}), and the preconditioned DE formula (\texttt{PDE}).
Here, the convergence speed means the constant $\rho$ in the error of the $m$-point quadrature formula $\mathcal{O}(\exp(-\rho m))$.
For \texttt{DE} and \texttt{PDE}, the error tolerance is set to $10^{-12}$.
In Figure \ref{fig.convergence_rate}, an algorithm with higher values on the $y$-axis converges faster.
For example, when $\kappa(\widetilde{A}) = 10^4$, the fastest algorithm may be \texttt{PGL} and the next one is \texttt{DE}.
The figure shows that \texttt{GL} will be the fastest when $\kappa(A) \lesssim 130$, \texttt{PGL} will be the fastest when $130 \lesssim \kappa(\widetilde{A}) \lesssim 3.0 \times 10^5$, and \texttt{DE} will be the fastest when $\kappa(\widetilde{A}) \gtrsim 3.0\times 10^5$.
The preconditioning is effective for the GL quadrature because its convergence speed is sensitive to changes in $\kappa(A)$.
For the opposite reason, the preconditioning will not be effective for the DE formula.

\begin{figure}
  \centering
  \includegraphics[width=10cm]{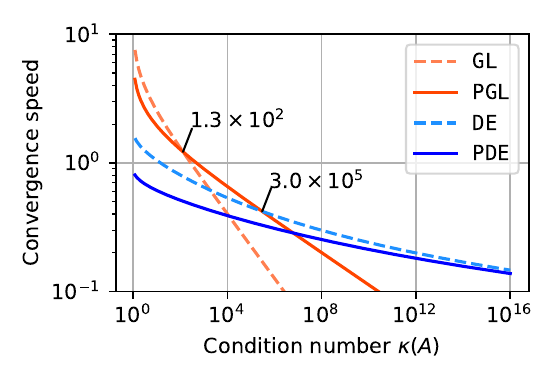}
  \caption{
    Convergence speed of quadrature formulas for $\log(A)$, i.e., the convergence of a quadrature formula is fast if the values in the graph are large.
  }
  \label{fig.convergence_rate}
\end{figure}

\section{Numerical experiments}
For numerical experiments, we used Julia 1.11.1 for the programs and ran them on a machine with a Core-i7 9700K CPU and 48GB RAM.
The source code and implementation details can be found on GitHub\footnote{\url{https://github.com/f-ttok/article-logm-preconditioning}}.

First, we check the convergence of \texttt{PGL}.
The test matrix is $A = \mathrm{tridiag}(-1, 2, -1)$, where $n = 200$ and $\kappa(A) \approx 1.6 \times 10^4$.
The reference solution is computed in arbitrary precision arithmetic with the \texttt{BigFloat} data type of Julia, which gives roughly 77 significant decimal digits.
The convergence profiles of \texttt{DE}, \texttt{GL}, and \texttt{PGL} are shown in Figure \ref{fig.history}.
Additionally, the error of the GL quadrature for $\log(\widetilde{A}\widetilde{P}_1)$ and $\log(\widetilde{P}_1)$ is also plotted in the figure.
Figure \ref{fig.history} demonstrates the fast convergence of \texttt{PGL} due to the efficient computation of $\log(\widetilde{A}\widetilde{P}_1)$ and $\log(\widetilde{P}_1)$.

Next, we computed the action of the matrix logarithm on a vector $\log(\widetilde{A}) \boldsymbol{b}$.
The test matrices are listed in Table \ref{tab:test_matrices}, and the vector $\boldsymbol{b}$ is $[1,\dots,1]^\top / \sqrt{n}$.
The number of abscissas was determined so that the error $\|\log(\widetilde{A})\boldsymbol{b} - \boldsymbol{x}\|_2$ is less than $10^{-12}$.
The linear systems in the integrand were solved using a sparse direct linear solver in \texttt{SparseArrays.jl}\footnote{\url{https://github.com/JuliaSparse/SparseArrays.jl}}.
The computational time and the number of evaluations of the integrand are listed in Table \ref{tab:cputime}.
We note that the time does not include the time to compute extreme eigenvalues of $A$.
Because the condition numbers of the test matrices range from $10^2$ to $10^6$, \texttt{PGL} is expected to be the fastest for most of the test matrices.
Indeed, the number of integrand evaluations of \texttt{PGL} was the smallest except for \texttt{gyro\_m} whose condition number is $1.2\times 10^6 > 3.0\times 10^5$, and the computation time of \texttt{PGL} was also the shortest except for \texttt{Pres\_Poisson}, \texttt{Dubcova1}, and \texttt{gyro\_m}.
The results show the effectiveness of the preconditioning.

\begin{figure}
  \centering
  \includegraphics[width=8cm]{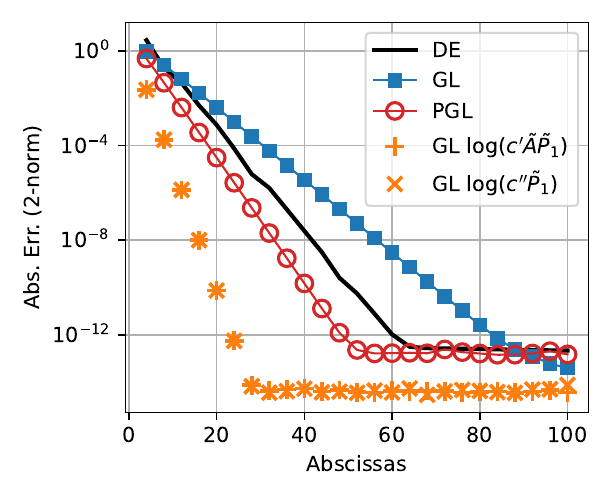}
  \caption{
    Convergence profiles of \texttt{DE}, \texttt{GL}, and \texttt{PGL}.
    We also plotted the error of the GL quadrature for $\log(c'\widetilde{A}\widetilde{P}_1)$ and $\log(c''\widetilde{P}_1)$ that are used in \texttt{PGL}.
  }
  \label{fig.history}
\end{figure}

\begin{table}[htbp]
  \centering
  \caption{Test matrices from SuiteSparse Matrix Collection \cite{davis2011university}}
  \begin{tabular}{lrl}
    \hline
    Matrix & Size $n$ & $\kappa(A)$\\
    \hline
    \texttt{Kuu} & 7102 & $3.4\times 10^4$\\
    \texttt{fv3} & 9801 & $2.0\times 10^3$\\
    \texttt{bundle1} & 10581 & $1.0 \times 10^3$\\
    \texttt{crystm02} & 13965 & $2.5\times 10^2$\\
    \texttt{Pres\_Poisson} & 14822 & $3.5\times 10^5$\\
    \texttt{Dubcova1} & 16129 & $6.8\times 10^4$\\
    \texttt{gyro\_m} & 17361 & $1.2\times 10^6$\\
    \texttt{bodyy5} & 18589 & $7.9\times 10^3$\\
    \texttt{bodyy6} & 19366 & $7.7\times 10^4$\\
    \hline
  \end{tabular}
  \label{tab:test_matrices}
\end{table}

\begin{table}[htbp]
  \centering
  \caption{
    Comparison of quadrature-based algorithms in terms of CPU time (in seconds) and the number of evaluations of the integrand (in parentheses).
    The results of the fastest algorithm among the four algorithms are written in bold face.
  }
  \begin{tabular}{l|rr|rr|rr|rr}
    \hline
    Matrix & \multicolumn{2}{|c|}{\texttt{GL}} & \multicolumn{2}{|c|}{\texttt{DE}} & \multicolumn{2}{|c|}{\texttt{PGL}} & \multicolumn{2}{|c}{\texttt{PDE}}\\
    \hline
    \texttt{Kuu} & 6.17 & (100) & 3.34 & (64) & \textbf{3.17} & \textbf{(54)} & 4.75 & (84)\\
    \texttt{fv3} & 0.57 & (49) & 0.65 & (53) & \textbf{0.43} & \textbf{(38)} & 0.86 & (72)\\
    \texttt{bundle1} & 1.89 & (41) & 2.21 & (48) & \textbf{1.55} & \textbf{(34)} & 3.16 & (70)\\
    \texttt{crystm02} & 1.80 & (29) & 2.75 & (45) & \textbf{1.63} & \textbf{(28)} & 4.12 & (68)\\
    \texttt{Pres\_Poisson} & 37.30 & (179) & \textbf{13.87} & \textbf{(76)} & 15.13 & (74) & 17.74 & (94)\\
    \texttt{Dubcova1} & 10.06 & (119) & \textbf{4.67} & \textbf{(64)} & 4.71 & (60) & 7.03 & (90)\\
    \texttt{gyro\_m} & 24.36 & (244) & \textbf{6.97} & \textbf{(81)} & 7.56 & (86) & 8.51 & (96)\\
    \texttt{bodyy5} & 1.99 & (69) & 1.85 & (59) & \textbf{1.29} & \textbf{(44)} & 2.24 & (74)\\
    \texttt{bodyy6} & 3.91 & (122) & 2.38 & (68) & \textbf{2.00} & \textbf{(60)} & 3.00 & (90)\\
    \hline
  \end{tabular}
  \label{tab:cputime}
\end{table}

\section{Conclusion}
We have presented a preconditioning technique of the GL quadrature for the logarithms of SPD matrices.
The condition numbers of the preconditioned matrices are the square root of the condition number of the original matrix.
In particular, when $130 \lesssim \kappa(A) \lesssim 3.0\times 10^5$, the preconditioned GL quadrature is faster than other quadrature-based algorithms.

\section*{Acknowledgments}
This work was supported by JSPS KAKENHI Grant Number 20H00581.


\end{document}